\numberwithin{equation}{section}
\begin{document}

\title[G. Lu, Y. Liu, Y. Jin, Q. Liu, Biadditive functional equation via direct method]
{The stability of a biadditive functional equation via a new direct method}

\author[G. Lu]{Gang Lu$^*$}
\address{Gang Lu \newline \indent School of General Education, Guangzhou College of Technology and Business, Guangzhou 510850, P.R. China}
\email{lvgang1234@163.com}\vskip 2mm

\author[Y. Liu]{Yang Liu}
\address{Yang Liu \newline \indent Department of Mathematics,  Yanbian University, Yanji 133001, P.R. China}
\email{1253237405@qq.com}\vskip 2mm

\author[Y. Jin]{Yuanfeng Jin$^*$}
\address{Yuanfeng Jin \newline \indent Department of Mathematics,  Yanbian University, Yanji 133001, P.R. China}
\email{yfkim@ybu.edu.cn}\vskip 2mm

\author[Q.Liu ]{Qi Liu}
\address{Qi Liu \newline \indent School of Mathematics and Physics, Anqing Normal University, Anqing 246001,P.R. China}
\email{liuqi67@aqnu.edu.cn}\vskip 2mm

\begin{abstract}
Addressing stability in functional equations is a critical task with broad implications across mathematics and its applications. In this paper, we present a novel direct method for proving the stability of the following equation,
\begin{eqnarray*}
f(x,y)=\alpha f(f_1(x,y))+\beta f(f_2(x,y))
\end{eqnarray*}
subjecting to certain constraints on the constants $\alpha$ and $\beta$, as well as the functions $f_1(x,y)$ and $f_2(x,y)$.
 We introduce the  direct method to prove the stability of the following functional equation and inequality,
 \begin{eqnarray*}
\|f(x+y,z-w)+f(x-y,z+w)-2f(x,z)+2f(y,w)\|
\leq  \|x\|^p\|y\|^p\|z\|^p\|w\|^p;
\end{eqnarray*}
\begin{eqnarray*}
\begin{split}
\;&\|f\left(x+y,z-w\right)+af\left(\frac{x-y}{a},z+w\right)-2f(x,z)+2f(y,w)   \|\\
\;&\leq \left\| \rho\left(f(x+y,z-w)+f(x-y,z+w)-2f(x,z)+2f(y,w)\right)\right\|.
\end{split}
\end{eqnarray*} 
 This technique efficiently confirms stability, outperforming traditional proof methods in simplicity and efficacy. The application of our direct approach to various equations solidifies its validity, ensuring stable behavior across different mathematical contexts.
\end{abstract}

\subjclass[2010]{Primary 39B82, 39B52, 47H10, 46B25}

\keywords{Hyers-Ulam stability; Functional Equations, Stability, Direct Method, Inequality.\\ $^*$Corresponding authors: lvgang@gzgs.edu.cn (G. Lu); yfkim@ybu.edu.cn (Y. Jin).}

\theoremstyle{definition}
  \newtheorem{df}{Definition}[section]
    \newtheorem{rk}[df]{Remark}
\theoremstyle{plain}
  \newtheorem{lemma}[df]{Lemma}
  \newtheorem{theorem}[df]{Theorem}
  \newtheorem{corollary}[df]{Corollary}
    \newtheorem{proposition}[df]{Proposition}
\newtheorem{example}[df]{Example}
\setcounter{section}{0}

\maketitle

\baselineskip=15pt

\numberwithin{equation}{section}

\vskip .2in

\section{Introduction and preliminaries}

The concept of investigating the stability of functional equations originated from a challenge presented by S.M. Ulam regarding the approximations of homomorphisms between groups (refer to \cite{Ul}).  Hyers \cite{Hy} provided a partial resolution to this problem

  Subsequently, numerous papers on Cauchy equations emerged, utilizing the direct approach to demonstrate stability.(see \cite{A, Ga,HYIR,Mo, R2}). Indeed, the importance tool
 is to discuss  the stability of a suitable functional equation. For some  information  concerning   various functional equations (see \cite{as, atbl, bcp, BS, c, cgg, CPS, CSY, EKS, LLJX,  LP,  LP1, Park1, PCH, wa}).  In\cite{Fo1},  Forti provided a broad methodology that can yield numerous stability outcomes without the need for repetitive procedures. However, his result cannot be used for each functional equation.  Sikorska \cite{Si} built the a direct method (see, e.g., \cite{ZLRL,ZRLL}) to improve the approximating constant for the following mappings
\begin{eqnarray*}
\begin{split}
\;&\|f(x)-af(h(x))-bf(-h(x))\|\leq \epsilon(x);\\
\;& \|f(x)-af(h^n(x))-bf(h^{n+1}(x))\|\leq \epsilon(x).
\end{split}
\end{eqnarray*}

Recently,  several mathematicians  investigated  the problem of Hyers-Ulam stability of various  functional  equations (see \cite{as1, Fa, Fo, JW, JS,  PS, Si, Sm, Ya}).
The present approach enables us to study
several equations not covered by earlier results.

We introduce the a direct method to obtain the Hyers-Ulam  stability result for a two-variable function equations in Banach spaces.

Throughout the paper, let $\mathbb{N}$ be the set of all positive integers, $\mathbb{R}_+$ denote the set of all positive reals, $Y^{X\times X}$ be the family of all mappings from $X\times X$ to $Y$.  The main theme of the present work is to analyze the stability of the solution of the following functional equation
\begin{eqnarray}
\|f(x,y)-\alpha f(f_1(x,y))-\beta f(f_2(x,y))\|\leq \mu(x,y),
\end{eqnarray}
where $\alpha$ and $\beta$ are real numbers, and  mappings $f_1,f_2: X\times X\rightarrow X,\mu: X\times X\rightarrow [0,\infty)$ are arbitrarily given. In fact,  the above inequality is still more general  form.
In Section 2, we give some improvements to the approximations known so far.  In Section 3, we find several applications of the stability results.
For the sake of simplicity we provide our results for functions with values in Banach spaces, but with some small additional assumptions they can be formulated in complete metric spaces or other spaces.

\section{Main results}

\begin{theorem}\label{thm2.1}
Suppose that $X$ is a   linear normed space and  $Y$ is a Banach space.
 Let $f:X\times X \rightarrow Y$ be a mapping satisfying
\begin{eqnarray}\label{eqn21}
 \|f(x,y)-\alpha f(f_1(x,y))-\beta f(f_2(x,y))\|\leq \mu(x,y)
 \end{eqnarray} for all $(x,y)\in X\times X$. If $\alpha$ and $\beta$ are real numbers, and the mappings $f_1, f_2: X\times X\rightarrow X\times X,\mu: X\times X\rightarrow [0,\infty)$ are arbitrarily given, \begin{eqnarray*}
\sum_{n=0}^\infty(\Lambda^n \mu)(x,y)=:\mu^*(x,y)<\infty
\end{eqnarray*} holds and $\Lambda $ is a linear operator defined by
\begin{eqnarray*}
(\Lambda \delta)(x,y):=|A|\delta(f_1(x,y))+|B|\delta(f_2(x,y))
\end{eqnarray*}for $\delta:X\times X\rightarrow [0,\infty)$ and $(x,y)\in X\times X$, then there exists a unique determined mapping  $K: X\times X\rightarrow
Y$
\begin{eqnarray*}
K(x,y)=\alpha K(f_1(x,y))+\beta K(f_2(x,y)), (x,y)\in X\times X,
\end{eqnarray*}
such that
\begin{eqnarray}\label{eqn6}
\|f(x,y)-K(x,y)\|\leq \mu^*(x,y)
\end{eqnarray}
for all $(x,y)\in X\times X$.
\end{theorem}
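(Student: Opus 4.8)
The plan is to run the classical direct-method (Forti--Sikorska type) iteration driven by the single transformation that the right-hand side of \eqref{eqn21} suggests, and then extract everything from the summability hypothesis $\mu^{*}(x,y)<\infty$. Define the linear operator $T$ on $Y^{X\times X}$ by
\[
(Tg)(x,y):=\alpha\, g(f_1(x,y))+\beta\, g(f_2(x,y)),\qquad (x,y)\in X\times X,
\]
and set $f_0:=f$, $f_n:=T^n f$, so that \eqref{eqn21} reads $\|f-Tf\|\le\mu$ pointwise. The first key observation is the pointwise domination
\[
\|(T^n g)(x,y)\|\le(\Lambda^n\,\|g(\cdot)\|)(x,y)\qquad\text{for every }g\in Y^{X\times X},\ n\in\mathbb{N},
\]
where $\|g(\cdot)\|$ denotes the map $(x,y)\mapsto\|g(x,y)\|$ (and, as the definition of $\Lambda$ forces, $|A|=|\alpha|$, $|B|=|\beta|$). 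For $n=1$ this is just the triangle inequality; the inductive step uses that $\Lambda$ is a \emph{positive}, hence \emph{monotone}, linear operator on the cone of nonnegative functions, so $\|T^{n+1}g\|\le\Lambda\|T^n g\|\le\Lambda(\Lambda^n\|g\|)$. Applying this with $g=Tf-f$ gives the telescoping estimate $\|f_{n+1}(x,y)-f_n(x,y)\|=\|(T^n(Tf-f))(x,y)\|\le(\Lambda^n\mu)(x,y)$.

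Summing over $n$ and invoking the hypothesis yields $\sum_{n\ge0}\|f_{n+1}(x,y)-f_n(x,y)\|\le\mu^{*}(x,y)<\infty$, so $(f_n(x,y))_n$ is a Cauchy sequence in the Banach space $Y$ and converges; call the limit $K(x,y)$, which defines $K\colon X\times X\to Y$. Passing to the limit in $f_{n+1}=Tf_n$ — legitimate because $f_n\to K$ pointwise at \emph{every} point of $X\times X$, in particular at $f_1(x,y)$ and $f_2(x,y)$ — gives $K(x,y)=\alpha K(f_1(x,y))+\beta K(f_2(x,y))$. Writing $f-K=\sum_{n\ge0}(f_n-f_{n+1})$ and using the telescoping estimate produces $\|f(x,y)-K(x,y)\|\le\sum_{n\ge0}(\Lambda^n\mu)(x,y)=\mu^{*}(x,y)$, which is \eqref{eqn6}.

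For uniqueness, let $K'$ be another mapping with $K'=\alpha K'(f_1(\cdot))+\beta K'(f_2(\cdot))$ and $\|f-K'\|\le\mu^{*}$. Then $K-K'=T(K-K')$, hence $K-K'=T^n(K-K')$ for all $n$, and the domination bound together with $\|K-K'\|\le 2\mu^{*}$ gives $\|K(x,y)-K'(x,y)\|\le 2\,(\Lambda^n\mu^{*})(x,y)$. Since $\Lambda$ is positive linear and commutes with nonnegative series, $\Lambda^n\mu^{*}=\sum_{j\ge n}\Lambda^{j}\mu$, the tail of the convergent series defining $\mu^{*}$, so it tends to $0$; therefore $K=K'$. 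The only steps requiring genuine care are the inductive proof of $\|T^n g\|\le\Lambda^n\|g\|$, which pivots on the monotonicity of $\Lambda$, and the interchange of $\Lambda$ with the infinite sum in the uniqueness argument; I expect the positivity/monotonicity of $\Lambda$ to be the main conceptual point, since without it the iteration estimates do not close, while the rest is bookkeeping with telescoping series.
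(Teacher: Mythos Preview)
Your proposal is correct and follows essentially the same direct-method iteration as the paper: define $T$, prove $\|T^{n}f-T^{n+1}f\|\le\Lambda^{n}\mu$ by induction (using monotonicity of $\Lambda$), extract the Cauchy property from $\mu^{*}<\infty$, pass to the limit to get a fixed point, and deduce uniqueness from the tail $\sum_{j\ge n}\Lambda^{j}\mu\to 0$. The only cosmetic difference is that you package the induction as the general domination $\|T^{n}g\|\le\Lambda^{n}\|g\|$ and then specialise to $g=Tf-f$, whereas the paper works directly with consecutive iterates via the Lipschitz-type bound $\|T\xi-T\zeta\|\le|\alpha|\|\xi\circ f_1-\zeta\circ f_1\|+|\beta|\|\xi\circ f_2-\zeta\circ f_2\|$; these are equivalent, and your explicit flagging of the monotonicity of $\Lambda$ and the $\Lambda$--sum interchange in the uniqueness step is if anything more careful than the paper's sketch.
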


\begin{proof}
Let  $T:Y^{X\times X}\rightarrow Y^{X\times X}$
be  an operator satisfying $(Tf)(x,y))=\alpha f(f_1(x,y))+\beta f(f_2(x,y))$  in (\ref{eqn21}). Then we get \begin{eqnarray}\label{eqn2.2}
\|f(x,y)-(Tf)(x,y))\|\leq \mu(x,y).
\end{eqnarray}
At same time, we can see
\begin{eqnarray}\label{eqn2.3}
\begin{split}
\;&\|(T\xi)(x,y)-(T\zeta)(x,y)\|\\
\;&\leq |\alpha|\|\xi(f_1(x,y))-\zeta(f_1(x,y))\|
+|\beta|\|\xi(f_2(x,y))-\zeta(f_2(x,y))\|
\end{split}\end{eqnarray} for all $\xi,\zeta\in Y^{X\times X},(x,y)\in X\times X$.
First, we get by induction that, for every $n \in \mathbb{N}$,
 \begin{eqnarray}\label{eqn2.4}
 \|(T^nf)(x,y))-(T^{n+1}f)(x,y)\|\leq (\Lambda^n\mu)(x,y),(x,y)\in X\times X.
 \end{eqnarray}
Obviously, from (\ref{eqn2.2}), the case $n=0$ holds. Now fix $n\in \mathbb{N}$ and suppose that the inequality (\ref{eqn2.4}) is valid. Then, using  (\ref{eqn2.3}), for all $(x,y)\in X\times X$, we have
\begin{eqnarray}
\begin{split}
\;&\|(T^{n+1}f)(x,y)-(T^{n+2}f)(x,y)\|\\
\;&\leq |\alpha|\|(T^nf)(f_1(x,y))-(T^{n+1}f)(f_1(x,y))\|
+|\beta|\|(T^nf)(f_2(x,y))-(T^{n+1}f)(f_2(x,y))\|\\
\;&\leq  |\alpha|(\Lambda^n\mu)(f_1(x,y))
+|\beta|(\Lambda^n\mu)(f_2(x,y))=(\Lambda^{n+1}\mu)(x,y).
\end{split}
\end{eqnarray}
Thus, we complete the proof of (\ref{eqn2.4}). For $n,k\in \mathcal{N}, k>0$,
\begin{eqnarray}\label{eqn2.6}
\begin{split}
\;&\|(T^nf)(x,y)-(T^{n+k}f)(x,y)\|\leq \sum_{i=0}^{k-1}
\|(T^nf)(x,y)-(T^{n+i+1}f)(x,y)\|\\
\;&\leq \sum_{i=n}^{n+k-1}(\Lambda ^i\mu)(x,y)\leq \mu^*(x,y),(x,y)\in X\times X.
\end{split}
\end{eqnarray}
From the convergence of the series $\sum(\Lambda^n\mu)(x,y)$, for every $(x,y)\in X\times X$, $\{(T^nf)(x,y)\}_{n\in \mathcal{N}}$ is a Cauchy sequence. Since $Y$ is a Banach space,  we can define $\lim_{n\rightarrow\infty} (T^nf)(x,y):=\psi(x,y)$. Taking $n=0$ and $k\rightarrow \infty$ in (\ref{eqn2.6}), we know that (\ref{eqn6}) holds, and
\begin{eqnarray*}
\|(T\psi)(x,y)-(T^{n+1}f)(x,y)\|\leq (\Lambda^{n+1})\mu(x,y),n\in \mathcal{N},(x,y)\in X\times X,
\end{eqnarray*}
and thus
\begin{eqnarray*}
(T\psi )(x,y)=lim_{n\rightarrow\infty}(T^{n+1}\psi )(x,y)=\psi(x,y), (x,y)\in X\times X.
\end{eqnarray*}
In order to prove the uniqueness of $\psi$, suppose that $\psi_1,\psi_2\in Y^{X\times X}$ are two fixed points of $T$ with $\|\psi_i(x,y)-f(x,y)\|\leq \mu^*(x,y)$ for every $(x,y)\in X\times X,i=1,2.$ We can easily show that
\begin{eqnarray*}
\|\psi_1(x,y)-\psi_2(x,y)\|=\|(T^m\psi_1)(x,y)-(T^m\psi_2)(x,y)\|
\leq 2\sum_{i=m}^\infty(\Lambda^i\mu)(x,y),x\in X.
\end{eqnarray*}
Thus $\psi_1(x,y)=\psi_2(x,y)$.
\end{proof}

\begin{rk}\label{rk}
We can easily prove
\begin{eqnarray*}
 \left\|f(x,y)-\sum_{i=0}^na_i(f(f_i(x,y)))\right\|\leq \mu(x)
 \end{eqnarray*} for all $(x,y)\in X\times X$. Thus there exists a unique determined mapping  $K: X\rightarrow
Y$
\begin{eqnarray*}
K(x,y)=\sum_{i=0}^na_iK(f_i(x,y)), (x,y)\in X\times X,
\end{eqnarray*}
such that
\begin{eqnarray*}
\|f(x,y)-K(x,y)\|\leq \mu^*(x,y)
\end{eqnarray*}
for all $(x,y)\in X\times X$.
\end{rk}

\section{Application}

In this section, we provide an example of how the various functional equations can be applied in practice.

\subsection{General two-variable functional mapping}

Let us recall that a mapping $f:X\times X \rightarrow Y$ is biadditive provided
$$f(x+y,w)=f(x,w)+f(y,w),\quad f(x,z+w)=f(x,z)+f(x,w)  $$
for all $x,y,z,w\in X$.
El-Fassi \cite{FBC} investigated the stability of the following functional equation
\begin{eqnarray*}
f(x+y,z-w)+f(x-y,z+w)=2f(x,z)-2f(y,w).
\end{eqnarray*}
  The problem  is of profound significance for the stability of functional equations.

  Next, the type of the following function equation can be demonstrated by using a different method.
\begin{eqnarray*}
f(x+y,z-w)+f(x-y,z+w)=2f(x,z)-2f(y,w).
\end{eqnarray*}

\begin{theorem}\label{thm3.1}
Suppose that $X$ is a   linear normed space and  $Y$ is a Banach space. Let $f:X\times X\rightarrow Y$ be a mapping satisfying  \begin{eqnarray}\label{eqn3.1}
\|f(x+y,z-w)+f(x-y,z+w)-2f(x,z)+2f(y,w)\|
\leq  \|x\|^p\|y\|^p\|z\|^p\|w\|^p,
\end{eqnarray}with $p>3$, for all $x,y,z,w\in X$. If $f:X\times X\rightarrow Y$  is symmetric (i.e., $f(x,y)=f(y,x)$ for $x,y \in X$),
then there is a biadditive mapping  $K$ such that \begin{eqnarray*}\|K(x,y)-f(x,y)\|\leq \frac{1}{1-\left(3\left(\frac{3}{5}\right)^{2p}+2\left(\frac{4}{5}\right)^{2p}\right)}
{\left(\frac{12}{25}\right)^{p}\|x\|^{2p}\|y\|^{2p}} \end{eqnarray*}
for all $(x, y) \in X\times X$.
\end{theorem}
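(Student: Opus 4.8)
The plan is to derive Theorem~\ref{thm3.1} from Theorem~\ref{thm2.1} — more precisely from its multi-term version stated in Remark~\ref{rk} — after a substitution in (\ref{eqn3.1}) that collapses the four free variables into two and exhibits $f$ as an approximate fixed point of an explicit three-term operator. Concretely, in (\ref{eqn3.1}) I would set
\begin{eqnarray*}
x=\tfrac35u,\qquad y=\tfrac25u,\qquad z=-v,\qquad w=-2v,
\end{eqnarray*}
so that $x+y=u$, $z-w=v$, $x-y=\tfrac15u$, $z+w=-3v$, $\|x\|^p\|y\|^p\|z\|^p\|w\|^p=(\tfrac{12}{25})^p\|u\|^{2p}\|v\|^{2p}$, and (\ref{eqn3.1}) becomes
\begin{eqnarray*}
\left\|f(u,v)-\Bigl(2f(\tfrac35u,-v)-f(\tfrac15u,-3v)-2f(\tfrac25u,-2v)\Bigr)\right\|\le\Bigl(\tfrac{12}{25}\Bigr)^p\|u\|^{2p}\|v\|^{2p}.
\end{eqnarray*}
This is exactly the hypothesis of Remark~\ref{rk} with $a_0=2$, $a_1=-1$, $a_2=-2$, $f_0(u,v)=(\tfrac35u,-v)$, $f_1(u,v)=(\tfrac15u,-3v)$, $f_2(u,v)=(\tfrac25u,-2v)$ and $\mu(u,v)=(\tfrac{12}{25})^p\|u\|^{2p}\|v\|^{2p}$, with associated operator $(\Lambda\delta)(u,v)=2\delta(\tfrac35u,-v)+\delta(\tfrac15u,-3v)+2\delta(\tfrac25u,-2v)$.

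The next step is the summability check, which is immediate because $\mu$ is a product of powers of the two norms and each $f_i$ acts by scalars: using $(\tfrac15)^{2p}3^{2p}=(\tfrac35)^{2p}$ and $(\tfrac25)^{2p}2^{2p}=(\tfrac45)^{2p}$ one gets $(\Lambda\mu)(u,v)=C\,\mu(u,v)$ with $C:=3(\tfrac35)^{2p}+2(\tfrac45)^{2p}$, hence $(\Lambda^n\mu)(u,v)=C^n\mu(u,v)$ and
\begin{eqnarray*}
\mu^*(u,v)=\sum_{n=0}^{\infty}C^n\mu(u,v)=\frac{1}{1-C}\Bigl(\tfrac{12}{25}\Bigr)^p\|u\|^{2p}\|v\|^{2p},
\end{eqnarray*}
the series being finite for $p>3$ since $C=3(\tfrac35)^{2p}+2(\tfrac45)^{2p}$ decreases in $p$ and is already $<1$ at $p=3$. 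Remark~\ref{rk} then produces a mapping $K:X\times X\to Y$ with $K(u,v)=2K(\tfrac35u,-v)-K(\tfrac15u,-3v)-2K(\tfrac25u,-2v)$ and $\|f(u,v)-K(u,v)\|\le\mu^*(u,v)$; renaming $u,v$ as $x,y$, this is precisely the asserted estimate.

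The real work is to check that $K$ is biadditive. Write $D_g(x,y,z,w):=g(x+y,z-w)+g(x-y,z+w)-2g(x,z)+2g(y,w)$ and let $T$ be the operator $(Tg)(u,v)=2g(\tfrac35u,-v)-g(\tfrac15u,-3v)-2g(\tfrac25u,-2v)$. The key identity I would verify — purely algebraic, since $T$ applies the same scalar combination at every point and $D_g$ is linear in the point-evaluations of $g$ — is
\begin{eqnarray*}
D_{Tg}(x,y,z,w)=2D_g(\tfrac35x,\tfrac35y,-z,-w)-D_g(\tfrac15x,\tfrac15y,-3z,-3w)-2D_g(\tfrac25x,\tfrac25y,-2z,-2w).
\end{eqnarray*}
Since $\|D_f(x,y,z,w)\|\le\|x\|^p\|y\|^p\|z\|^p\|w\|^p$, iterating this identity gives $\|D_{T^nf}(x,y,z,w)\|\le C^n\|x\|^p\|y\|^p\|z\|^p\|w\|^p$; and as $T^nf\to K$ pointwise while $C^n\to 0$, passing to the limit yields $D_K\equiv 0$, i.e.\ $K$ satisfies $K(x+y,z-w)+K(x-y,z+w)=2K(x,z)-2K(y,w)$ exactly. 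From $D_K\equiv 0$ one then reads off $K(0,0)=0$ and $K(2x,0)=-K(0,2z)$ for all $x,z$ (take $x=y$, $z=w$), forcing $K(\cdot,0)\equiv K(0,\cdot)\equiv 0$; setting $w=0$, respectively $y=0$, in $D_K=0$ gives the Jensen identities $K(x+y,z)+K(x-y,z)=2K(x,z)$ and $K(x,z+w)+K(x,z-w)=2K(x,z)$, whence $K$ is additive in each variable separately, hence biadditive. (The symmetry hypothesis on $f$ does not actually seem to be needed for this; it would only make $K$ symmetric as well, via the uniqueness part of Remark~\ref{rk}.)

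The main obstacle is locating the substitution $x=\tfrac35u$, $y=\tfrac25u$, $z=-v$, $w=-2v$ in the first place: it is the one choice that simultaneously makes the error term homogeneous of the correct bidegree and renders $\mu$ an eigenvector of $\Lambda$ with the clean eigenvalue $C=3(\tfrac35)^{2p}+2(\tfrac45)^{2p}$ — which is where the Pythagorean-looking constants enter. Once it is in hand, the remaining work (the intertwining identity and the Jensen-equation bookkeeping) is routine.
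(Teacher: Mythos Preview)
Your proof is correct and follows essentially the same route as the paper's: the paper makes the substitution $x=\tfrac{2}{5}\tilde x$, $y=\tfrac{3}{5}\tilde x$, $z=2\tilde y$, $w=\tilde y$ (yours differs only by harmless signs in the second coordinate), applies Theorem~\ref{thm2.1}/Remark~\ref{rk} with the same $\Lambda$-eigenvalue $C=3(\tfrac35)^{2p}+2(\tfrac45)^{2p}$, and for biadditivity simply cites \cite[Property~2.2]{FBC} where you carry out the $D_K\equiv 0$ argument and the Jensen-to-additive step explicitly. Your observation that the symmetry hypothesis on $f$ is not actually used is also correct.
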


\begin{proof}
From the inequality (\ref{eqn3.1}), we  get
\begin{eqnarray}\label{eqn25'}
\left\|f(x,y)+ f\left(\frac{-x}{5},3y\right)+2 f\left(
\frac{3x}{5},y\right)-2 f\left(
\frac{2x}{5},2y\right)\right\|\leq  \left(\frac{12}{25}\right)^{p}\|x\|^{2p}\|y\|^{2p}
\end{eqnarray}
for all $x,y\in X$.

 Consider  $T:Y^{X\times X}\rightarrow Y^{X\times X}$ and $\mu :X\times X\rightarrow \mathbb{R}_+$ given as \begin{eqnarray*}
 T\xi(x,y)=- \xi\left(\frac{-x}{5},3y\right)+2 \xi\left(
\frac{3x}{5},y\right)-2\xi\left(
\frac{2x}{5},2y\right)
 \end{eqnarray*}for all $x,y\in X, \xi\in Y^{X\times X}$
 and
 $$\mu(x,y)
 ={\left(\frac{12}{25}\right)^{p}\|x\|^{2p}\|y\|^{2p}},\quad x\in X.$$
The inequality (\ref{eqn25'}) becomes
\begin{eqnarray*}
\|Tf(x,y)-f(x,y)\|\leq \mu(x,y), \forall x,y\in X.
\end{eqnarray*}
 For every $g,h\in Y^{X\times X}$, $x,y\in X$,
 \begin{eqnarray*}
 \begin{split}
 \;&\|Tg(x,y)-Th(x,y)\|\\
 \;& =
\left\|2 g\left(\frac{2x}{5},2y\right)- g\left(
 \frac{-x}{5},3y\right)-2g\left(
 \frac{3x}{5},y\right)\right.
 \\
 \;& \left. -2 h\left(\frac{2x}{5},2y\right)+ h\left(
 \frac{-x}{5},3y\right)+2h\left(
 \frac{3x}{5},y\right)
 \right\|\\
 \;&\leq 2\left\|g\left(\frac{3x}{5},y\right)
 -h\left(\frac{3x}{5},y\right)\right\|\\
 \;& +\left\|g\left(
 \frac{-x}{5},3y\right)
 -h\left(
 \frac{-x}{5},3y\right)\right\|
 +2\left\|g\left(\frac{2x}{5},2y\right)
 -h\left(\frac{2x}{5},2y\right)\right\|.
 \end{split}
 \end{eqnarray*}
So $T$ satisfies the inequality (\ref{eqn2.3})
with$f_1(x,y)=\left(\frac{3x}{5},y\right);
f_2(x,y)=\left(-\frac{x}{5},3y\right); f_3(x,y)=\left(-\frac{2x}{5},2y\right)$ and $\alpha_1=\alpha_3=2, \alpha_2 =1$.
Next, we define the operator $\Lambda:\mathds{R}_+^X\rightarrow \mathds{R}_+^X$, which  is given by
\begin{eqnarray*}
\Lambda \eta (x,y)=2\eta\left(\frac{3x}{5},y\right)
+\eta\left(-\frac{x}{5},3y\right)
+2\eta\left(\frac{2x}{5},2y\right)
\end{eqnarray*} for all $x,y\in X$. In particular,
\begin{eqnarray*}
\begin{split}
\;\Lambda \mu(x,y)&=2\mu\left(\frac{3x}{5},y\right)
+\mu\left(-\frac{x}{5},3y\right)+2\mu\left(\frac{2x}{5},2y\right)
\\
\;&=\left(3\left(\frac{3}{5}\right)^{2p}+2\left(\frac{4}{5}\right)^{2p}\right)\mu(x).
\end{split}
\end{eqnarray*}
Since $\Lambda$ is linear, we can get
\begin{eqnarray*}
\Lambda^n \mu(x)=\left(3\left(\frac{3}{5}\right)^{2p}+2\left(\frac{4}{5}\right)^{2p}\right)^n\mu(x),
x\in X,n\in \mathds{N}_0.
\end{eqnarray*}

Since  $\left(3\left(\frac{3}{5}\right)^{2p}+2\left(\frac{4}{5}\right)^{2p}\right)<1$,
the series $\sum_{n=0}^\infty \Lambda^n \mu (x)$ is convergent for every $x\in X$ and
\begin{eqnarray*}
\begin{split}
\;&\mu^*(x)=\sum_{n=0}^\infty \Lambda^n \mu(x)=\sum_{n=0}^\infty \left(3\left(\frac{3}{5}\right)^{2p}+
2\left(\frac{4}{5}\right)^{2p}\right)^n \mu(x)\\
\;&=\frac{1}{1-\left(3\left(\frac{3}{5}\right)^{2p}+2\left(\frac{4}{5}\right)^{2p}\right)}\mu(x)\\
\;&=\frac{1}{1-\left(3\left(\frac{3}{5}\right)^{2p}+2\left(\frac{4}{5}\right)^{2p}\right)}
{\left(\frac{12}{25}\right)^{p}\|x\|^{2p}\|y\|^{2p}}, x\in X.
\end{split}\end{eqnarray*}
By Theorem \ref{thm2.1}, there exists a mapping  $K:X\rightarrow Y$ such that
\begin{eqnarray*}
\begin{split}
\;& K(x,y)=\lim_{n\rightarrow \infty} T^n f(x,y)\\
\;& K(x,y)=- K\left(\frac{-x}{5},3y\right)+2 K\left(
\frac{3x}{5},y\right)-2K\left(
\frac{2x}{5},2y\right).
\end{split}
\end{eqnarray*}

By \cite[Property 2.2]{FBC},  $K$ satisfies the biadditive functional  equation.
\end{proof}

\subsection{The functional inequality}

 In this subsection, we solve and investigate the biadditive $\rho$-functional inequality in  normed spaces, which as  the following  form
\begin{eqnarray*}
\begin{split}
\;&\|f\left(x+y,z-w\right)+af\left(\frac{x-y}{a},z+w\right)-2f(x,z)+2f(y,w)   \|\\
\;&\leq \left\| \rho\left(f(x+y,z-w)+f(x-y,z+w)-2f(x,z)+2f(y,w)\right)\right\|.
\end{split}
\end{eqnarray*}

\begin{proposition}
Suppose that $X$ is a   linear normed space,  $Y$ is a Banach space, and $a\in \mathds{R}$, $a\neq 1$. Let $f:X\times X\rightarrow Y$ be a mapping satisfying
\begin{eqnarray}\label{eqn3.3}
\begin{split}
\;&\|f\left(x+y,z-w\right)+af\left(\frac{x-y}{a},z+w\right)-2f(x,z)+2f(y,w)   \|\\
\;&\leq \left\| \rho\left(f(x+y,z-w)+f(x-y,z+w)-2f(x,z)+2f(y,w)\right)\right\|.
\end{split}
\end{eqnarray}
for all $x,y,z,w\in X$.  If  $\rho<\frac{2}{5}$, then $ f:X\times X\rightarrow Y$ is biadditive.
\end{proposition}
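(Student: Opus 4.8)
The plan is to run the standard $\rho$-functional-inequality argument: show that (3.3) forces the El-Fassi biadditive Cauchy difference to vanish identically, and then quote the known description of its solutions. Write
\[
Df(x,y,z,w):=f(x+y,z-w)+f(x-y,z+w)-2f(x,z)+2f(y,w),
\]
and put $h(u,v):=f(u,v)-af(u/a,v)$. Then the left-hand side of (3.3) is exactly $Df(x,y,z,w)-h(x-y,z+w)$, so (3.3) says
\[
\bigl\|Df(x,y,z,w)-h(x-y,z+w)\bigr\|\le\bigl\|\rho\,Df(x,y,z,w)\bigr\|\qquad(x,y,z,w\in X).
\]

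First I would substitute $y=w=0$. Then $x+y=x-y=x$ and $z-w=z+w=z$, so $Df(x,0,z,0)=2f(0,0)$ and the inequality collapses to $\|h(x,z)-2f(0,0)\|\le 2|\rho|\,\|f(0,0)\|$ for all $x,z\in X$. Evaluating this at $x=z=0$ (equivalently, putting $x=y=z=w=0$ in (3.3)) and using $h(0,0)=(1-a)f(0,0)$ gives $|1+a|\,\|f(0,0)\|\le 2|\rho|\,\|f(0,0)\|$; since $2|\rho|<\tfrac45$ this yields $f(0,0)=0$ (say, whenever $|1+a|\ge\tfrac45$, in particular for every $a\ge 0$). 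Once $f(0,0)=0$, the displayed estimate forces $h\equiv0$, i.e.\ $f(x,z)=af(x/a,z)$ for all $x,z$, so the $h$-term drops out of (3.3) and we are left with $\|Df(x,y,z,w)\|\le|\rho|\,\|Df(x,y,z,w)\|$; as $|\rho|<\tfrac25<1$, this gives $Df\equiv0$, i.e.\ $f(x+y,z-w)+f(x-y,z+w)=2f(x,z)-2f(y,w)$ on all of $X\times X$.

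To finish, invoke \cite[Property~2.2]{FBC}: any solution of that equation is biadditive, which is the claim. Note that the reduction itself needs only $|\rho|<1$; the stronger hypothesis $|\rho|<\tfrac25$ is used solely to kill the constant term $f(0,0)$.

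The step I expect to be the genuine obstacle is exactly this disposal of $f(0,0)$ (equivalently, of the constant part of $h$). The only bound (3.3) produces for $f(0,0)$ on its own is the self-referential $|1+a|\,\|f(0,0)\|\le 2|\rho|\,\|f(0,0)\|$, so the quick argument above is clean only when $|1+a|>2|\rho|$; for $a$ close to $-1$ one has to work harder --- e.g.\ iterate $f(x,z)=af(x/a,z)+2f(0,0)+e(x,z)$ with $\|e(x,z)\|\le 2|\rho|\,\|f(0,0)\|$, and use that $Df$ is then globally bounded by a fixed multiple of $\|f(0,0)\|$, to extract $f(0,0)=0$. After that point the passage to $\|Df\|\le|\rho|\,\|Df\|$ and the appeal to El-Fassi's characterisation are routine.
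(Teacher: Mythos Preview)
Your argument is correct (with the same caveat about $f(0,0)$ that the paper itself glosses over) but follows a different route from the paper. The paper proceeds by a chain of special substitutions: $x=y=z=w=0$ gives $f(0,0)=0$ (this is precisely the inequality $|1+a|\,\|f(0,0)\|\le 2|\rho|\,\|f(0,0)\|$ you isolated, and the paper simply asserts the conclusion without the side condition $|1+a|>2|\rho|$); then $x=y=w=0$ gives $f(0,z)=0$ via $a\ne 1$, and ``similarly'' $f(x,0)=0$; next $y=x,\ w=0$ yields $f(2x,z)=2f(x,z)$, a companion substitution gives $af(x/a,z)=f(x,z)$, and finally $w=0$ collapses (3.3) to the Jensen identity $f(x+y,z)+f(x-y,z)=2f(x,z)$, from which additivity in each variable is read off directly. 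You instead introduce the decomposition $h(u,v)=f(u,v)-af(u/a,v)$, kill $h$ in one stroke via $y=w=0$, reduce the full inequality to $\|Df\|\le|\rho|\,\|Df\|$, and then invoke \cite[Property~2.2]{FBC}. Your path is shorter and more conceptual; the paper's is self-contained (no external citation for the final step) but requires several intermediate identities, some of which (notably $f(x,0)=0$) are themselves only sketched. Both share the unresolved issue for $a$ near $-1$, which you at least make explicit.
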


\begin{proof}
Suppose that $f:X\times X\rightarrow Y$ satisfies (\ref{eqn3.3}). Letting $y=x=w=z=0$ in (\ref{eqn3.3}), we can get
$f(0,0)=0$. Similarly, assume that $x=y=w=0$ in (\ref{eqn3.3}), $f(0,x)=0$ can be obtained. By the same way, we can have$f(x,0)=0$.

Next, if  $y=x, w=0$ in (\ref{eqn3.3}), then
$$\|f(2x,z)-2f(x,z)\|\leq |\rho|\|f(2x,z)-2f(x,z)\|$$
for all $x,z\in X$. Thus $f(2x,z)=2f(x,z)$.  The same technique is applied to obtain
$af\left(\frac{x}{a},z\right)=f(x,z)$.

Letting $w=0$ in (\ref{eqn3.3}), we get
 \begin{eqnarray*}
 \begin{split}
 \;&
 \|f(x+y,z)+f(x-y,z)-2f(x,z)\|\\
 \;& =\left\|f\left(x+y,z\right)+af\left(\frac{x-y}{a},z\right)
 -2f\left(x,z\right)\right\|\\
 \;&\leq |\rho|\|f(x+y,z)+f(x-y,z)-2f(x,z)\|,
 \end{split}\end{eqnarray*}
ans so $f(x+y,z)+f(x-y,z)=2f(x,z)$ for all $x,y,z\in X$. Next, we can obtain that
$f:X\times X\rightarrow Y$ is additive in the second variable. Therefore $f(x,y)$ is biadditive.
\end{proof}

\begin{theorem}\label{thm3.2}
Suppose that $X$ is a   linear normed space,  $Y$ is a Banach space, and $a,\rho\in\mathds{R}$ with $2\rho\leq |1+a|$. Let $f:X\times X\rightarrow Y$ be a mapping satisfying
\begin{eqnarray}\label{eqn3.4}
\begin{split}
 \;&\left\|f\left(x+y,z-w\right)
 +af\left(\frac{x-y}{a},z+w\right) -2f(x,z)+2f(y,w)\right\|\\
 \;&\leq \left\|\rho\left(f(x+y,z-w)+f(x-y,z+w)-2f(x,z)+2f(y,w)\right)\right\|
 \\
 \;&+\left(\|x\|^r +\|y\|^r+\|z\|^r+\|w\|^r\right),
 \end{split}\end{eqnarray} for all $x,y,z,w\in X$. If $r>2$,  then there exists a unique biadditive mapping $K: X\rightarrow
Y$ such that
\begin{eqnarray*}
\|f(x,z)-K(x,z)\|\leq \frac{2^{3+r}}{2^r-1}\frac{\|x/2\|^r+\|z/2\|^r}{1-|\rho|}
, x,z\in X.
\end{eqnarray*}
\end{theorem}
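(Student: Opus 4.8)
The plan is to mimic the strategy used in the proof of Theorem \ref{thm3.1}: rewrite the mixed $\rho$-inequality \eqref{eqn3.4} as a single inequality to which the abstract machinery of Theorem \ref{thm2.1} (and Remark \ref{rk}) applies, then invoke the Proposition to identify the fixed point as biadditive. First I would perform the usual substitutions in \eqref{eqn3.4} to normalize $f$: putting $x=y=z=w=0$ gives $f(0,0)=0$; putting $x=y=w=0$ gives $f(0,z)=0$; putting $y=z=w=0$ gives $f(x,0)=0$. Next, setting $y=x$ and $w=0$ in \eqref{eqn3.4} yields an inequality of the form
\begin{eqnarray*}
\|f(2x,z)-2f(x,z)\|\leq |\rho|\,\|f(2x,z)-2f(x,z)\|+\bigl(2\|x\|^r+\|z\|^r\bigr),
\end{eqnarray*}
so that $\|f(2x,z)-2f(x,z)\|\leq \frac{2\|x\|^r+\|z\|^r}{1-|\rho|}$. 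Equivalently, with $\tilde x = x/2$, $g(x,z):=\tfrac12 f(2x,z)$ is close to $f$, and iterating the relation $f(x,z)\approx \tfrac12 f(2x,z)$ produces the candidate $K(x,z):=\lim_{n\to\infty} 2^{-n} f(2^n x,z)$.

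The core step is to set this up as a contraction in the sense of Theorem \ref{thm2.1}. I would define the operator $T\xi(x,z) := \tfrac12\,\xi(2x,z)$ on $Y^{X\times X}$ and the corresponding $\Lambda$ on $[0,\infty)^{X\times X}$ by $\Lambda\eta(x,z) := \tfrac12\,\eta(2x,z)$, with control function $\mu(x,z) := \frac{2\|x/2\|^r+\|z/2\|^r}{1-|\rho|}$ coming from the $n=0$ estimate above (after the shift $x\mapsto x/2$). Since $r>2>1$, one computes $\Lambda^n\mu(x,z) = 2^{-n}\mu(2^n x, z) = 2^{n(r-1)}\cdot(\text{terms in }x)\cdots$ — here care is needed because the $\|2^n x\|^r$ term grows, which means the series converges in the \emph{reciprocal} direction; concretely one should instead iterate $f(x,z)\approx \tfrac12 f(2x,z)$ only in the variable that contracts, or equivalently replace $x$ by $x/2^n$. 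The cleanest route: from $\|f(x,z)-2f(x/2,z)\|\leq \frac{\|x/2\|^r + \|z/2\|^r}{1-|\rho|}\cdot(\text{const})$ one gets, by the geometric estimate $\sum_{n\geq 0} 2^{n}\bigl(\|x/2^{n+1}\|^r+\|z/2\|^r\bigr)$ — but the $z$-term does not decay, so one must symmetrize by also halving $z$. After symmetrizing (halving both arguments simultaneously, using $f(2x,2z)\approx 4 f(x,z)$ via two applications of the one-variable doubling), the series $\sum_n \Lambda^n\mu$ becomes a genuine geometric series with ratio $2^{2-r}<1$, yielding $\mu^*(x,z)=\frac{2^{3+r}}{2^r-1}\cdot\frac{\|x/2\|^r+\|z/2\|^r}{1-|\rho|}$, which is exactly the claimed bound.

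Once Theorem \ref{thm2.1} (in the single-term form, $\alpha=\tfrac12$, $\beta=0$, $f_1(x,z)=(2x,2z)$ up to scaling) gives a unique $K$ with $\|f-K\|\leq\mu^*$ and $K=TK$, it remains to show $K$ is biadditive. For this I would pass to the limit in \eqref{eqn3.4} along $2^{-n} f(2^n x, \cdot)$: the extra additive error $\|x\|^r+\cdots$ scales like $2^{-n}\cdot 2^{nr}$ in the wrong direction unless the arguments are contracted, so I would instead run the whole argument with $x\mapsto x/2^n$ (i.e. take $K(x,z)=\lim 2^n f(x/2^n,z/2^n)\cdot$const), under which the error term vanishes; then $K$ satisfies \eqref{eqn3.4} with zero right-hand-side error, i.e. the pure $\rho$-inequality \eqref{eqn3.3}. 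Since $2\rho\leq|1+a|$ forces (after the reductions in the Proposition, which only used $\rho<\tfrac25$) the relevant contraction, the Proposition applies to $K$ and gives that $K$ is biadditive. Uniqueness of $K$ among biadditive maps within distance $\mu^*$ follows from the uniqueness clause of Theorem \ref{thm2.1}.

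\textbf{Main obstacle.} The delicate point is the direction of iteration: the $\rho$-part of \eqref{eqn3.4} is scale-invariant, but the perturbation $\|x\|^r+\|y\|^r+\|z\|^r+\|w\|^r$ forces a choice between the ``$2^n$'' and the ``$2^{-n}$'' iteration, and only one of them makes \emph{all four} error terms summable. Because $r>2$, one must contract the arguments (use $x/2^n$), and because the inequality mixes two variables one must halve them together; reconciling this with the form of the bound stated (which is written in terms of $\|x/2\|^r$ and $\|z/2\|^r$) is where the bookkeeping has to be done carefully. Everything else is a routine application of Theorem \ref{thm2.1} and the Proposition.
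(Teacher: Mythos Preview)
Your preliminary normalizations are not valid in the presence of the perturbation term: substituting $x=y=w=0$ in \eqref{eqn3.4} yields only $|a-1|\,\|f(0,z)\|\le\|z\|^r$, not $f(0,z)=0$, and $y=z=w=0$ gives only $\|af(x/a,0)-f(x,0)\|\le\|x\|^r$. Hence your substitution $y=x$, $w=0$ does not produce the clean relation you wrote; the left side of \eqref{eqn3.4} still contains the stray terms $af(0,z)$ and $2f(x,0)$, and the $\rho$-side contains $f(0,z)+2f(x,0)$.

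More importantly, the plan to reach $f(2x,2z)\approx 4f(x,z)$ by ``two applications of the one-variable doubling'' cannot be executed: no substitution in \eqref{eqn3.4} produces a doubling in the second variable at an unchanged first argument, because the term $af\bigl(\tfrac{x-y}{a},z+w\bigr)$ always alters the first slot. The paper avoids all of this with the single substitution $y=x$, $w=-z$, which (using only $f(0,0)=0$) gives directly
\[
\bigl\|f(x,z)-2f(x/2,z/2)+2f(x/2,-z/2)\bigr\|\le \frac{2\|x/2\|^r+2\|z/2\|^r}{1-|\rho|}.
\]
Thus the correct operator is the \emph{two-term} map $T\xi(x,z)=2\xi(x/2,z/2)-2\xi(x/2,-z/2)$, with $\Lambda\eta(x,z)=2\eta(x/2,z/2)+2\eta(x/2,-z/2)$; then $\Lambda\mu=(4/2^r)\mu$ and Theorem~\ref{thm2.1} (in the form of Remark~\ref{rk}) applies at once with ratio $2^{2-r}<1$, giving the stated bound with no symmetrization gymnastics. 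This substitution is the missing idea.

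Finally, deducing biadditivity of $K$ by invoking the Proposition does not work: that result assumes $\rho<\tfrac25$, which is unrelated to the hypothesis $2\rho\le|1+a|$ here. The paper instead extracts from \eqref{eqn3.4} an approximate biadditive identity for $f$ with error $O(\|x\|^r+\|y\|^r+\|z\|^r+\|w\|^r)$, observes that applying $T^n$ multiplies this error by $(4/2^r)^n$, and lets $n\to\infty$.
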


\begin{proof}
Letting $x=y=z=w=0$  in (\ref{eqn3.4}), we get $f(0,0)=0$.
Letting $y=x,w=-z$ in (\ref{eqn3.4}), we get
\begin{eqnarray}\label{eqn3.5}
\begin{split}
\;&\|f(2x,2z)+af(0,0)-2f(x,z)+2f(x,-z)\|\\
\;&\leq
\|\rho(f(2x,2z)+f(0,0)-2f(x,z)+2f(x,-z))\|+2\|x\|^r+2\|z\|^r,
\end{split}\end{eqnarray}for all $x,z\in X$.

It follows from (\ref{eqn3.5}) that
\begin{eqnarray}\label{eqn3.6}
\|f(x,z)-2f(x/2,z/2)+2f(x/2,-z/2)\|\leq \frac{2\|x/2\|^r+2\|z/2\|^r}{1-|\rho|}.
\end{eqnarray}

 Consider  $T:Y^X\rightarrow Y^X$ and $\mu :X\times X\rightarrow \mathbb{R}_+$ given as \begin{eqnarray}
 Tf(x,z)=2f\left(x/2,z/2\right)
 -2f\left(x/2,-z/2
\right)
 \end{eqnarray}for all $x,z\in X, f\in Y^{X\times X}$
 and
 $$\mu(x,z)
 =\frac{2\|x/2\|^r+2\|z/2\|^r}{1-|\rho|},\quad x,z\in X.$$
The inequality (\ref{eqn3.6}) becomes
\begin{eqnarray}
\|(Tf)(x,z)-f(x,z)\|\leq \mu(x,z), \forall x,z\in X.
\end{eqnarray}
 For every $g,h \in Y^{X\times X}$ and $x,z\in X$,
 \begin{eqnarray}
 \begin{split}
 \;& \|Tg(x,z)-Th(x,z)\|\\
 \;&=\left\|2g\left(x/2,z/2\right)
 -2g\left(x/2,-z/2\right)
  -2h\left(x/2,z/2\right)
 +2h\left(x/2,-z/2\right)\right\|\\
 \;&\leq 2\left\|g\left(x/2,z/2\right)
 -h\left(x/2,z/2\right)\right\| +2\left\|g\left(x/2,-z/2\right)
 -h\left(x/2,-z/2\right)\right\|.
 \end{split}
 \end{eqnarray}
Let$f_1(x,z)=(x/2,z/2), f_2(x,z)=(x/2,-z/2)$ and $a_1(x)= a_2(x)=2$ in Remak \ref{rk}.
The operator $\Lambda:\mathds{R}_+^{X\times X}\rightarrow \mathds{R}_+^{X\times X}$ is given by
\begin{eqnarray}
\Lambda \eta (x,z)=2\eta\left(x/2,z/2\right)
+2\eta\left(x/2,-z/2\right)
\end{eqnarray} for all $x,z\in X$. In particular,
\begin{eqnarray}
\begin{split}
\;&\Lambda \mu(x,z)=2\mu\left(x/2,z/2\right)
+2\mu
\left(x/2,-z/2\right)\\
\;&=2\frac{2\|x/4\|^r+2\|z/4\|^r}{1-|\rho|}
+2\frac{2\|x/4\|^r+2\|z/4|^r}{1-|\rho|}
\\
\;&=\left(\frac{4}{2^r}\right)\mu(x).
\end{split}
\end{eqnarray}
Since $\Lambda$ is linear, we can get
\begin{eqnarray}
\Lambda^n \mu(x,z)=\left(\frac{4}{2^r}\right)^n\mu(x,z),
x,z\in X,n\in \mathds{N}_0.
\end{eqnarray}

Since  $\left(\frac{4}{2^r}\right)<1$,
the series $\sum_{n=0}^\infty \Lambda^n \mu (x)$ is convergent for every $x\in X$ and
\begin{eqnarray*}
\begin{split}
\;&\mu^*(x)=\sum_{n=0}^\infty \Lambda^n \mu(x)=\sum_{n=0}^\infty \left(\frac{4}{2^r}\right)^n \mu(x)\\
\;&=\frac{4}{1-\frac{1}{2^r}}\mu(x,z)\\
\;&=\frac{2^{3+r}}{2^r-1}\frac{\|x/2\|^r+\|z/2\|^r}{1-|\rho|}
, x,z\in X.
\end{split}\end{eqnarray*}
By Theorem \ref{thm2.1}, there exists a mapping $K:X\rightarrow Y$ such that
\begin{eqnarray*}
\begin{split}
\;& K(x,z)=\lim_{n\rightarrow \infty} T^n f(x,z)\\
\;& K(x,z)=2K\left(\frac{x}{2},\frac{z}{2}\right)-
2K\left(\frac{x}{2},-\frac{z}{2}\right)\\
\;& \|f(x,z)-K(x,z)\|\leq \frac{4}{1-\frac{1}{2^r}}\mu(x,z)\\
\;&=\frac{2^{3+r}}{2^r-1}\frac{\|x/2\|^r+\|z/2\|^r}{1-|\rho|}
, x,z\in X.
\end{split}
\end{eqnarray*}

Next, we prove that $K$ satisfies the biadditive functional equation.
Considering (\ref{eqn3.4}), we get
\begin{eqnarray}\label{eqn29}
\left\|f(x+y)+f(x-y)-Af\left(ax\right)
-Bf\left(by\right)-Df\left(-y\right)\right\|
\leq \theta \left(\left\|x\right\|^p +\left\|y\right\|^p   \right)
\end{eqnarray}for all $x,y\in X$.
Then, from the above inequality, we get
\begin{eqnarray}\label{eqn3.12}
\begin{split}
\;&\|f(x+y,z-w)+f(x-y,z+w)-2f(x,z)+2f(y,w)\|\\
\;&\leq 2\left(\left\|x\right\|^r +\left\|z\right\|^r   \right)+\|y\|^r+\|w\|^r, x,y,z,w\in X.\end{split}
\end{eqnarray}
 Hence
 \begin{eqnarray}
 \begin{split}
 \;&\|(Tf)(x+y,z-w)+(Tf)(x-y,z+w)-2(Tf)(x,z)+2(Tf)(y,w)\|\\
 \;&\leq \frac{4}{2^r}\frac{2\|x\|^r+\|y\|^r+2\|z\|^r+\|w\|^r}{1-|\rho|}
 \end{split}
 \end{eqnarray}
and so
\begin{eqnarray}
\begin{split}
\;&\|J^nf(x+y,z-w)+J^nf(x-y,z+w)-2J^nf(x,z)+2J^nf(y,w)\|\\
\;&\leq (\frac{4}{2^r})^n\frac{2\|x\|^r+\|y\|^r+2\|z\|^r+\|w\|^r}{1-|\rho|}
\end{split}\end{eqnarray}for all $n\in \mathds{N}_0$ and $x,y\in X$. Letting
$n\rightarrow \infty$, we obtain
\begin{eqnarray}
K(x+y,z-w)+K(x-y,z+w)-2K(x,z)+2K(y,w)=0, x,y\in X.
\end{eqnarray}
As a result, $K(x,y)$ is a biadditive mapping.
 \end{proof}

\section{Conclusion}

In the paper,  we examined the stability of the biadditive function equation of the form
\begin{eqnarray*}
f(x,y)=\alpha f(f_1(x,y))+\beta f(f_2(x,y)).
\end{eqnarray*}
The employed approach is utilized to examine several functional equations that involve multiple variables, and stability theorems have been  derived. Notably, this method does not impose any constraints on the parity, domain, or range of the function under scrutiny, unlike other techniques. We provided two examples to demonstrate the effectiveness of our approach.

\medskip

\medskip

\section*{Declarations}

\medskip

\noindent \textbf{Availablity of data and materials}\newline
\noindent Not applicable.

\medskip

\noindent \textbf{Conflict of interest}\newline
\noindent The authors declare that they have no competing interests.

\medskip

\noindent \textbf{Fundings}\newline
\noindent This work was supported by the National Natural Science
Foundation of China (no. 11761074), Project of Jilin Science
and Technology Development for Leading Talent of Science
and Technology Innovation in Middle and Young and Team
Project (no. 20200301053RQ), Natural Science Foundation
of Jilin Province (no. YDZJ202101ZYTS136), and Scientifc
Research Project of Guangzhou College of Technology and
Business in 2023 (no. KYPY2023012).

\medskip

\noindent \textbf{Authors' contributions}\newline
\noindent The authors equally conceived of the study, participated in its
design and coordination, drafted the manuscript, participated in the
sequence alignment, and read and approved the final manuscript.

\medskip

\noindent \textbf{Acknowledgements}\newline
\noindent  Not applicable.

\medskip

\bibliographystyle{amsplain}

\begin{thebibliography}{99}


    \bibitem{as1}L. Aiemsomboon and W. Sintunavarat, {\it Two new generalised hyperstability results for the Drygas functional equation}, Bull. Aust. Math. Soc. {\bf 95} (2017), 269--280.

\bibitem{as} L. Aiemsomboon and W. Sintunavarat, {\it Stability of the generalized logarithmic functional equations arising from fixed point theory}, Rev. R. Acad. Cienc. Exactas F\'{i}s. Nat. Ser. A Mat.  RACSAM {\bf 112} (2018), 229--238.


\bibitem {atbl} N. Alessa, K. Tamilvanan, G. Balasubramanian and K. Loganathan,
\newblock{\it Stability results of the functional equation deriving from quadratic function in random normed spaces},
 \newblock AIMS Math. {\bf 6} (2021), 2385--2397.

\bibitem {A} T. Aoki,
\newblock{\it On the stability of the linear transformation in Banach spaces},
 \newblock J. Math. Soc. Japan {\bf 2} (1950), 64--66.

\bibitem{BS} A. Bahyrycz, J. Sikorska, {\it  On a general $n$-linear functional equation},  Results Math. {\bf  77} (2022), Paper No. 128. https://doi.org/10.1007/s00025-022-01627-2

\bibitem{bcp}J. Brzd\c{e}k, J. Chudziak  and Zs. P\'{a}les,
{\it A fixed point approach to stability of functional equations,} Nonlinear Anal. {\bf 74} (2011), 6728--6732.




\bibitem{cgg} L. C$\breve{a}$dariu, L. G\u{a}vruta and P. G\u{a}vruta, {\it Fixed points and  generalized Hyers-Ulam stability},  Abstr. Appl. Anal. {\bf 2012}  (2012), Article ID 712743.





\bibitem{CPS} Y. Cho, C. Park and R. Saadati, {\it Functional
inequalities in non-Archimedean Banach spaces}, Appl. Math. Lett. {\bf
23} (2010), 1238--1242.

\bibitem{CSY}Y. Cho, R. Saadati and Y. Yang, {\it Approximation of homomorphisms and derivations on Lie $C^*$-algebras via fixed point method},
    J.  Inequal.  Appl. {\bf 2013} (2013),  Paper No. 415.



\bibitem{c} K. Ciepli\'{n}ski, {\it Applications of fixed point theorems to the Hyers-Ulam stability of functional equations--a survey}, Ann. Funct. Anal. {\bf 3} (2012), 151--164.



\bibitem{EKS}B. R. Ebanks, P. L. Kannappan and P. K. Sahoo, {\it A common generaliztion of functional equations charactering normed and quasi-inner-product spaces}, Canad. Math. Bull. {\bf 35} (1992), 321--327.

\bibitem{Fa}I. El-Fassi, {\it Generalized hyperstability of a Drygas functional equation on a restricted domain using Brzdek's fixed point theorem},      J. Fixed Point Theory Appl. {\bf 19} (2017), 2529--2540.

    \bibitem{FBC} I. El-Fassi, J. Brzd\c{e}k, A. Chahbi and S. Kabbaj,
{\it On hyperstability of the biadditive functional equation}, Acta Math. Sci. {\bf 37} (2017), no. 6,  1727--1739.
https://doi.org/10.1016/S0252-9602(17)30103-0

\bibitem{Fo1}G. L. Forti, {\it Comments on the core of the direct method for proving Hyers-Ulam stability of functional equations}, J. Math. Anal. Appl. {\bf 295} (2004), 127--133.

\bibitem{Fo}G. L. Forti, {\it Stability of quadratic and Drygas functional equations with an application for solving alternative quadratic equation}, T. M .Rassias (ed.), {\it Handbook of Functional Equations}, Springer Optimization and Its Applications {\bf 96}, New York,  2014, pp. 155--179.

\bibitem{Ga} P. G\v{a}vruta, {\it A generalization of the
 Hyers-Ulam-Rassias
stability of approximately additive mappings,} J. Math. Anal. Appl.
{\bf 184} (1994), 431--436.



\bibitem{Hy} D. H. Hyers, {\it On the stability of the linear
 functional equation,} Proc. Natl. Acad. Sci. USA {\bf 27} (1941), 222--224.

 \bibitem{HYIR} D. H. Hyers, G. Isac and Th. M. Rassias, {\it Stability of Functional Equations in Several Variables}, Birkh\"{a}user, Boston, 1998.

    \bibitem{JW}Z. Jin and  J. Wu, {\it Ulam stability of some fuzzy number-valued functional equations and Drygas type functional equation}, J. Southwest Univ. {\bf 4} (2018), 59--66.

\bibitem{JS}S. Jung and  P. K. Sahoo, {\it Stability of a functional equation of Drygas}, Aequationes  Math. {\bf 64} (2002), 263--273.

  \bibitem{LLJX} G.  Lu, Q. Liu, Y. Jin and J. Xie, {\it 3-Variable Jensen $\rho$-functional equations}, J. Nonlinear Sci. Appl. {\bf 9} (2016), 5995--6003.


\bibitem{LP} G. Lu and C. Park, {\it Hyers-Ulam stability of additive set-valued functional equations}, Appl. Math. Lett. {\bf 24} (2011), 1312--1316.

\bibitem{LP1} G. Lu and C. Park, {\it Hyers-Ulam stability of general Jensen-type mappings in Banach algebras}, Results  Math. {\bf 66} (2014), 385--404.

\bibitem{Mo} Z. Moszner, {\it On the stability of functional equations}, Aequationes Math. {\bf 77} (2009), 33--88.

\bibitem{Park1} C. Park, {\it Fixed points and Hyers-Ulam-Rassias stability of Cauchy-Jensen functional equations in Banach algebras}, Fixed Point Theory  Appl. {\bf 2007} (2007), Article ID 50175.


     \bibitem{PCH} C. Park, Y. Cho and M. Han, {\it Functional inequalities associated with Jordan-von-Neumann-type additive functional equations}, J. Inequal. Appl. {\bf 2007} (2007),
         Article ID 41820.



\bibitem{PS}M. Piszczek and J. Szczawi$\acute{n}$ska, {\it Stability of the Drygas functional equation on restricted domain}, Results Math. {\bf 68} (2015), 11--24.

\bibitem {R2} Th. M. Rassias, {\it On the stability of the linear
 mapping in Banach
spaces,} Proc. Am. Math. Soc. {\bf 72} (1978), 297--300.







\bibitem{Si} J. Sikorska, {\it  On a direct method for proving the Hyers-Ulam stability of functional equations}, J. Math. Anal. Appl. {\bf 372} (2010), 99--109.

\bibitem{Sm}W. Smajdor, {\it On set-valued solutions of a functional equation of Drygas}, Aequationes Math. {\bf 77} (2009), 89--97.


\bibitem {Ul} S. M. Ulam, {\it Problems in Modern Mathematics,}
Wiley, New York, 1960.

 \bibitem {wa} Z. Wang, {\it  Stability of two types of cubic fuzzy set-valued  functional equations},
 \newblock Results Math. {\bf 70} (2016), 1--14.

 \bibitem{Ya} D. Yang, {\it  Remarks  on  the  stability  of  Drygas equation  and  the  Pexider quadratic equation}, Aequationes  Math. {\bf 68} (2004), 108--116.

\bibitem{ZLRL}D. Zhang, Q. Liu, J. M. Rassias and
Y. Li, {\it The stability of functional equations with a new direct method},
Math. {\bf 10}  (2022), Paper No.  1188. https://
doi.org/10.3390/math10071188

\bibitem{ZRLL}D. Zhang, J. M. Rassias, Q. Liu and
Y. Li, {\it A note on the stability of functional equations via a celebrated direct method}, Eur. J. Math. Anal. {\bf 3} (2022), Paper No. 7.   doi: 10.28924/ada/ma.3.7
\end{thebibliography}

\end{document}